\newtheorem{theorem}{Theorem}[section]
\newtheorem{definition}[theorem]{Definition}
\begin{document}

%%%%%% TO BE ENTERED BY THE AUTHOR(S)
%%%
%%% ENTER TITLE
\title{On a new definition of fractional differintegrals with Mittag-Leffler kernel}

%%% AUTHOR(S) FULL NAMES, AND EMAIL ADDRESSES
\author[1]{Arran Fernandez \thanks{Email: \texttt{af454@cam.ac.uk}}}
%%%
\author[2,3]{Dumitru Baleanu \thanks{Email: \texttt{dumitru@cankaya.edu.tr}}}
%%% ENTER AUTHOR(S) AFFILIATION(S)
\affil[1]{\small Department of Applied Mathematics and Theoretical Physics, University of Cambridge, Wilberforce Road, CB3 0WA, UK}
\affil[2]{\small Department of Mathematics, Cankaya University, 06530 Balgat, Ankara, Turkey}
\affil[3]{\small Institute of Space Sciences, Magurele-Bucharest, Romania}

\maketitle

\begin{abstract}
We introduce and study the properties of a new family of fractional differential and integral operators which are based directly on an iteration process and therefore satisfy a semigroup property. We also solve some ODEs in this new model and discuss applications of our results.
\end{abstract}

\section{Background and motivation}

Fractional calculus -- the study of differentiation and integration to non-integer orders -- is a branch of mathematics which has undergone rapid expansion in the last few decades thanks to the discovery of applications in many fields of science \cite{hilfer, kilbas, podlubny, samko}.

Researchers are trying to find the best families of fractional operators in order to better describe the complexity of various real-world phenomena. For an excellent review about the main achievements of fractional calculus up to the year of 1974, we recommend the reader to \cite{ross}, while for a review of progress made since then, the details may be found in \cite{baleanu-et-al, uchaikin}.

The papers of Liouville \cite{liouville} and Caputo \cite{caputo}, introducing the models of fractional calculus which are now standard and known as the Riemann--Liouville and Caputo definitions, were both motivated by real-world considerations. Each author created new fractional derivatives, which had not previously been used, based on the fact that they could be used successfully to model certain real problems.

The same process is ongoing even up to the present day, with papers such as \cite{kilbas2}, \cite{srivastava}, \cite{caputo2}, \cite{katugampola}, and \cite{atangana} appearing each year which introduce new models of fractional calculus in order to apply them in the real world. Fractional models defined using non-singular kernels were motivated by the existence of certain non-local systems, which have real-world applications in describing heterogeneities and fluctuations but which are not amenable to being modelled either by classical local calculus or by fractional calculus with singular kernels.

Thus an important question is how to classify these diverse definitions of fractional operators. But despite many attempts to define what makes an operator a fractional derivative, e.g. in \cite{ross} and \cite{ortigueira} (see also the correction \cite{katugampola2} to the latter), this important problem still remains open.

However, we think that there are more families of fractional derivatives and integrals, and that all together may describe related parts of the dynamics of nonlocal complex systems. The ultimate criteria of which fractional derivative is more suitable for a given real world process will be given by the relevant experimental data.

One interesting issue in fractional calculus is the way various formulations of fractional derivatives and integrals introduced the gamma function. In Liouville's approach \cite{liouville}, for example, it appears naturally: the integral formula for $1/x$ leads directly to the gamma function. In the approach of fractional integrals and derivatives with non-singular kernel \cite{atangana}, the gamma function appears naturally by the definition of the Mittag-Leffler function as a series.

We recall from \cite{atangana} that the formulae for fractional derivatives and integrals with Mittag-Leffler kernel are given by the equations \eqref{ABI:defn}-\eqref{ABC:defn}. This model of fractional calculus is usually referred to as the \textbf{AB model}. AB integrals are denoted by $\prescript{AB}{}I$ and defined by
\begin{equation}
\label{ABI:defn}
\prescript{AB}{}I^{\alpha}_{a+}f(t)=\frac{1-\alpha}{B(\alpha)}f(t)+\frac{\alpha}{B(\alpha)}\prescript{RL}{}I_{a+}^{\alpha}f(t).
\end{equation}
There are two distinct expressions for AB derivatives, according to whether the differentiation is done after or before the integration with kernel. These are called derivatives of Riemann--Liouville type and Caputo type, in analogy with the definitions of the classical Riemann--Liouville and Caputo fractional derivatives, and denoted by $\prescript{ABR}{}D$ and $\prescript{ABC}{}D$ respectively; their definitions are as follows.
\begin{align}
\prescript{ABR}{}D^{\alpha}_{a+}f(t)&=\frac{B(\alpha)}{1-\alpha}\frac{\mathrm{d}}{\mathrm{d}t}\int_a^tf(x)E_\alpha\Big(\tfrac{-\alpha}{1-\alpha}(t-x)^{\alpha}\Big)\,\mathrm{d}x; \label{ABR:defn}\\
\prescript{ABC}{}D^{\alpha}_{a+}f(t)&=\frac{B(\alpha)}{1-\alpha}\int_a^tf'(x)E_\alpha\Big(\tfrac{-\alpha}{1-\alpha}(t-x)^{\alpha}\Big)\,\mathrm{d}x. \label{ABC:defn}
\end{align}
Each of these formulae \eqref{ABI:defn}-\eqref{ABC:defn} is valid for $0<\alpha<1$, $a<b$ in $\mathbb{R}$, $f\in L^1[a,b]$, and the function $B$ is a multiplier which satisfies $B(0)=B(1)=1$. For simplicity, as was done in \cite{baleanu-fernandez}, we shall also assume that $B$ only takes real positive values. Applications of the AB model of fractional calculus have been explored in many recent papers, for example \cite{djida,alqahtani,gomezaguilar,owolabi}.

In the current work, we go beyond this to develop a new model of fractional calculus which, unlike the AB model \cite{baleanu-fernandez}, has a semigroup property. This is significant because the semigroup property is an intuitive criterion which it seems natural for fractional differintegrals to satisfy \cite{samko}. The starting point for our work, motivated by the recent paper \cite{jarad}, is to consider iterations of the AB formula, which can be used to derive a new expression for fractional differintegrals.

Our paper is structured as follows. In section 2 we derive the definition of our new fractional calculus, and verify that it reduces as expected for some simple values of the order of differintegration. In section 3 we prove some fundamental properties of these differintegrals, including that they are bounded operators and satisfy a semigroup property. In section 4 we solve some fractional ODEs in the new model and consider applications of our results.

\section{Deriving the formula}

The expression \eqref{ABI:defn} for AB fractional integrals can be rewritten in distributional form as follows:
\begin{align*}
\prescript{AB}{}I^{\alpha}_{a+}f(t)&=\frac{1-\alpha}{B(\alpha)}f(t)+\frac{\alpha}{B(\alpha)\Gamma(\alpha)}\int_{a}^t(t-x)^{\alpha-1}f(x)\,\mathrm{d}x \\
&=\int_{a}^tf(x)\left[\frac{1-\alpha}{B(\alpha)}\delta(t-x)+\frac{\alpha}{B(\alpha)\Gamma(\alpha)}(t-x)^{\alpha-1}\right]\,\mathrm{d}x,
\end{align*}
where $\delta$ is the Dirac delta function.

Iterating the AB integral an arbitrary natural number of times gives the following formula for sequential AB fractional integrals:
\begin{align}
\nonumber \left(\prescript{AB}{}I^{\alpha}_{a+}\right)^nf(t)&=\left[\frac{1-\alpha}{B(\alpha)}+\frac{\alpha}{B(\alpha)}\prescript{RL}{}I_{a+}^{\alpha}\right]^nf(t) \\
&=\sum_{k=0}^n\frac{\binom{n}{k}(1-\alpha)^{n-k}\alpha^k}{B(\alpha)^n}\prescript{RL}{}I_{a+}^{\alpha k}f(t) \label{iterate:n:RL}\\
&=\left(\frac{1-\alpha}{B(\alpha)}\right)^nf(t)+\sum_{k=1}^n\frac{\binom{n}{k}(1-\alpha)^{n-k}\alpha^k}{B(\alpha)^n\Gamma(k\alpha)}\int_{a}^t(t-x)^{k\alpha-1}f(x)\,\mathrm{d}x, \label{iterate:n:integral}
\end{align}
where we have used the fact that Riemann--Liouville integrals satisfy the semigroup property. This formula too can be written in distributional form, as follows:
\begin{equation}
\label{iterate:n:delta}
\left(\prescript{AB}{}I^{\alpha}_{a+}\right)^nf(t)=\int_{a}^tf(x)\left[\left(\frac{1-\alpha}{B(\alpha)}\right)^n\delta(t-x)+\sum_{k=1}^n\frac{\binom{n}{k}(1-\alpha)^{n-k}\alpha^k}{B(\alpha)^n\Gamma(k\alpha)}(t-x)^{k\alpha-1}\right]\,\mathrm{d}x.
\end{equation}

The series in equations \eqref{iterate:n:RL}-\eqref{iterate:n:delta} is a finite binomial series arising from the $n$th power. Thus it is easy to generalise to arbitrary powers, using an infinite binomial series. We define the $\beta$th iteration of the $\alpha$th AB integral, for $0<\alpha<1$ and $\beta\in\mathbb{R}$, by the following equivalent formulae. (We include all three of these formulae because each of them can be more useful than the others in particular contexts.)
\begin{align}
\left(\prescript{AB}{}I^{\alpha}_{a+}\right)^{\beta}f(t)&=\sum_{k=0}^{\infty}\frac{\binom{\beta}{k}(1-\alpha)^{\beta-k}\alpha^k}{B(\alpha)^{\beta}}\prescript{RL}{}I_{a+}^{\alpha k}f(t) \label{iterate:beta:RL}\\
&=\left(\frac{1-\alpha}{B(\alpha)}\right)^{\beta}f(t)+\sum_{k=1}^{\infty}\frac{\binom{\beta}{k}(1-\alpha)^{\beta-k}\alpha^k}{B(\alpha)^{\beta}\Gamma(k\alpha)}\int_{a}^t(t-x)^{k\alpha-1}f(x)\,\mathrm{d}x \label{iterate:beta:integral}\\
&=\int_{a}^t\left(\frac{1-\alpha}{B(\alpha)}\right)^{\beta}f(x)\left[\delta(t-x)+\sum_{k=1}^{\infty}\frac{\binom{\beta}{k}(1-\alpha)^{-k}\alpha^k}{\Gamma(k\alpha)}(t-x)^{k\alpha-1}\right]\,\mathrm{d}x. \label{iterate:beta:delta}
\end{align}
Note that this formula is valid regardless of the sign of $\beta$: it is a true fractional differintegral, covering both derivatives and integrals equally. We shall see below that this differintegral has various desirable properties, but first of all we formalise the definition as follows.

\begin{definition}
\label{IAB:defn}
Let $0\leq\alpha\leq1$, $\beta\in\mathbb{R}$, $a<b$ in $\mathbb{R}$, and $f:[a,b]\rightarrow\mathbb{R}$ be an $L^1$ function. The $\beta$th iteration of the $\alpha$th AB integral of a function $f$, which we shall call an \textbf{iterated AB differintegral} and denote by $\mathcal{I}_{a+}^{(\alpha,\beta)}f(t)$, is defined by the formulae \eqref{iterate:beta:RL}-\eqref{iterate:beta:delta}. In other words, the iterated AB integral is given by
\begin{equation}
\label{IAB:defn:int}
\mathcal{I}_{a+}^{(\alpha,\beta)}f(t)=\sum_{k=0}^{\infty}\frac{\binom{\beta}{k}(1-\alpha)^{\beta-k}\alpha^k}{B(\alpha)^{\beta}}\prescript{RL}{}I_{a+}^{\alpha k}f(t),
\end{equation}
and the iterated AB derivative is given by
\begin{equation}
\label{IAB:defn:deriv}
\mathcal{D}_{a+}^{(\alpha,\beta)}f(t)=\sum_{k=0}^{\infty}\frac{\binom{-\beta}{k}\alpha^kB(\alpha)^{\beta}}{(1-\alpha)^{\beta+k}}\prescript{RL}{}I_{a+}^{\alpha k}f(t).
\end{equation}
\end{definition}

In order to demonstrate the appropriateness of Definition \ref{IAB:defn}, let us consider how this differintegral behaves in different specific cases of the variables $\alpha$ and $\beta$.

\begin{itemize}
\item If $\alpha=0$, then the operator is trivial: \[\mathcal{I}_{a+}^{(0,\beta)}f(t)=f(t).\]
\item If $\beta=0$, then the operator is trivial: \[\mathcal{I}_{a+}^{(\alpha,0)}f(t)=f(t).\]
\item If $\beta=n\in\mathbb{N}$, then the original formulae \eqref{iterate:n:RL}-\eqref{iterate:n:delta} for iterated AB integrals are recovered: \[\mathcal{I}_{a+}^{(\alpha,n)}f(t)=\left(\prescript{AB}{}I^{\alpha}_{a+}\right)^nf(t).\]
\item If $\beta=-1$, then the operator is the ABR derivative, because \eqref{iterate:beta:integral} becomes the series expression found in \cite{baleanu-fernandez}, while \eqref{iterate:beta:delta} is analogous to the distributional formulation of the ABC derivative used in \cite{fernandez-baleanu}: \[\mathcal{I}_{a+}^{(\alpha,-1)}f(t)=\prescript{ABR}{}D^{\alpha}_{a+}f(t).\]
\item If $\beta=-n$, $n\in\mathbb{N}$, then similarly the operator is the iterated ABR derivative: \[\mathcal{I}_{a+}^{(\alpha,-n)}f(t)=\left(\prescript{ABR}{}D^{\alpha}_{a+}\right)^nf(t).\]
\end{itemize}

We shall now prove some basic properties of our new definition. In particular, we note that convergence of the series \eqref{IAB:defn:int} and \eqref{IAB:defn:deriv} is given by the boundedness of the associated operators, proved in Theorem \ref{bounded} below.

\section{Fundamental properties}

The Laplace transforms of iterated AB differintegrals are easy to compute, and behave as we would expect them to given the definition. More precisely, we have the following theorem.

\begin{theorem}[Laplace transforms]
\label{Laplace}
If $\alpha$, $\beta$, $a$, $b$, and $f$ are as in Definition \ref{IAB:defn} and $f$ has a well-defined Laplace transform, then the Laplace transform of its iterated AB differintegral is given by
\begin{equation}
\label{Laplace:eqn}
\mathcal{L}\left(\mathcal{I}_{0+}^{(\alpha,\beta)}f(t)\right)=\left(\frac{1-\alpha}{B(\alpha)}+\frac{\alpha}{B(\alpha)}s^{-\alpha}\right)^{\beta}\hat{f}(s),
\end{equation}
where $\mathcal{L}$ and $\hat{ }$ both denote the Laplace transform.
\end{theorem}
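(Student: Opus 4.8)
The plan is to work directly from the series definition \eqref{IAB:defn:int}, apply the Laplace transform term by term, and recognise the resulting sum as a binomial series. Concretely, I would start from
\[
\mathcal{I}_{0+}^{(\alpha,\beta)}f(t)=\sum_{k=0}^{\infty}\frac{\binom{\beta}{k}(1-\alpha)^{\beta-k}\alpha^k}{B(\alpha)^{\beta}}\prescript{RL}{}I_{0+}^{\alpha k}f(t),
\]
and use the standard fact that the Laplace transform of a Riemann--Liouville integral multiplies the transform by a power of $s$, namely $\mathcal{L}\left(\prescript{RL}{}I_{0+}^{\gamma}f(t)\right)=s^{-\gamma}\hat{f}(s)$ for the base point $a=0$. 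Applying this with $\gamma=\alpha k$ to each term gives a factor $s^{-\alpha k}\hat f(s)$, so the transformed series becomes
\[
\mathcal{L}\left(\mathcal{I}_{0+}^{(\alpha,\beta)}f(t)\right)=\frac{(1-\alpha)^{\beta}}{B(\alpha)^{\beta}}\,\hat f(s)\sum_{k=0}^{\infty}\binom{\beta}{k}\left(\frac{\alpha}{1-\alpha}\,s^{-\alpha}\right)^{k}.
\]

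The next step is to sum the series. The sum $\sum_{k=0}^{\infty}\binom{\beta}{k}z^{k}$ is exactly the binomial expansion of $(1+z)^{\beta}$, valid for $|z|<1$. Taking $z=\frac{\alpha}{1-\alpha}s^{-\alpha}$, I would obtain $(1+z)^{\beta}=\left(1+\frac{\alpha}{1-\alpha}s^{-\alpha}\right)^{\beta}$, and then factor the prefactor $\left(\frac{1-\alpha}{B(\alpha)}\right)^{\beta}$ into the bracket. Since $\frac{1-\alpha}{B(\alpha)}\cdot\left(1+\frac{\alpha}{1-\alpha}s^{-\alpha}\right)=\frac{1-\alpha}{B(\alpha)}+\frac{\alpha}{B(\alpha)}s^{-\alpha}$, this collapses precisely to the claimed expression \eqref{Laplace:eqn}. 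The $\beta=0$ and $\alpha=0$ trivial cases, as well as the natural-number case where the binomial series terminates and matches \eqref{iterate:n:RL}, provide useful consistency checks.

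The main obstacle is justifying the interchange of the Laplace transform (an integral) with the infinite summation, together with the convergence of the binomial series itself. The binomial series converges for $\left|\frac{\alpha}{1-\alpha}s^{-\alpha}\right|<1$, i.e. only for $\mathrm{Re}(s)$ sufficiently large (or $|s|$ large), so I would first establish the identity on that region of the $s$-plane and then appeal to analytic continuation to extend it, noting that both sides are analytic in $s$ wherever defined. To legitimise the term-by-term transform I would invoke uniform convergence of the partial sums on the relevant domain, or equivalently use the dominated-convergence/Fubini argument underpinned by the boundedness of the operators established in Theorem \ref{bounded}; that boundedness is what guarantees the series \eqref{IAB:defn:int} converges in $L^1$ in the first place, so the term-by-term application of $\mathcal{L}$ is justified. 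I would flag this convergence bookkeeping as the one genuinely delicate point; the algebra of recognising and resumming the binomial series is routine by comparison.
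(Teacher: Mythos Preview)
Your proposal is correct and follows essentially the same route as the paper's proof: start from the series \eqref{IAB:defn:int}, apply the Laplace transform term by term using $\mathcal{L}\big(\prescript{RL}{}I_{0+}^{\alpha k}f\big)=s^{-\alpha k}\hat f(s)$, and resum via the binomial theorem. If anything, your version is more careful than the paper's, which performs the same algebra but does not explicitly discuss the interchange of sum and integral, the radius of convergence of the binomial series in $s$, or the analytic continuation step.
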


\begin{proof}
This follows from the formula \eqref{iterate:beta:RL}, since we know what the Laplace transforms of Riemann--Liouville fractional operators look like. More explicitly, the proof runs as follows.
\begin{align*}
\mathcal{L}\left(\mathcal{I}_{0+}^{(\alpha,\beta)}f(t)\right)&=\mathcal{L}\left(\sum_{k=0}^{\infty}\frac{\binom{\beta}{k}(1-\alpha)^{\beta-k}\alpha^k}{B(\alpha)^{\beta}}\prescript{RL}{}I_{0+}^{\alpha k}f(t)\right) \\
&=\sum_{k=0}^{\infty}\frac{\binom{\beta}{k}(1-\alpha)^{\beta-k}\alpha^k}{B(\alpha)^{\beta}}\mathcal{L}\left(\prescript{RL}{}I_{0+}^{\alpha k}f(t)\right) \\
&=\sum_{k=0}^{\infty}\frac{\binom{\beta}{k}(1-\alpha)^{\beta-k}\alpha^k}{B(\alpha)^{\beta}}s^{-\alpha k}\hat{f}(s) \\
&=\sum_{k=0}^{\infty}\frac{\binom{\beta}{k}(1-\alpha)^{\beta-k}(\alpha s^{-\alpha})^k}{B(\alpha)^{\beta}}\hat{f}(s) \\
&=\left(\frac{1-\alpha}{B(\alpha)}+\frac{\alpha}{B(\alpha)}s^{-\alpha}\right)^{\beta}\hat{f}(s),
\end{align*}
where for the last step we used the binomial theorem again.
\end{proof}

A very important aspect to consider for any fractional differintegral is the semigroup property, i.e. the question of whether or not a differintegral of a differintegral is a differintegral of the expected order. In the Riemann--Liouville model, fractional integrals satisfy the semigroup property, but fractional derivatives do not except under special conditions \cite{samko}. In the AB model, neither derivatives nor integrals satisfy the semigroup property \cite{baleanu-fernandez}. It turns out that in our new model, there is a semigroup property for all differintegrals.

\begin{theorem}[Semigroup property]
\label{semigroup}
Iterated AB differintegrals have a semigroup property in $\beta$, i.e.
\begin{equation}
\label{semigroup:eqn}
\mathcal{I}_{a+}^{(\alpha,\beta)}\mathcal{I}_{a+}^{(\alpha,\gamma)}f(t)=\mathcal{I}_{a+}^{(\alpha,\beta+\gamma)}f(t)
\end{equation}
for all $\alpha\in[0,1]$, $\beta,\gamma\in\mathbb{R}$, and $a,f$ as in Definition \ref{IAB:defn}.
\end{theorem}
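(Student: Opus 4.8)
The plan is to substitute the series definition \eqref{IAB:defn:int} for both operators, combine the Riemann--Liouville factors using their semigroup property, and recognise the resulting binomial convolution as a single iterated AB differintegral by means of the Chu--Vandermonde identity. First I would expand the composition as a double series. Applying \eqref{IAB:defn:int} twice gives
\[
\mathcal{I}_{a+}^{(\alpha,\beta)}\mathcal{I}_{a+}^{(\alpha,\gamma)}f(t)=\sum_{j=0}^{\infty}\sum_{k=0}^{\infty}\frac{\binom{\beta}{j}\binom{\gamma}{k}(1-\alpha)^{\beta+\gamma-j-k}\alpha^{j+k}}{B(\alpha)^{\beta+\gamma}}\,\prescript{RL}{}I_{a+}^{\alpha j}\prescript{RL}{}I_{a+}^{\alpha k}f(t),
\]
where pulling the outer operator through the inner sum is legitimate because each $\mathcal{I}_{a+}^{(\alpha,\beta)}$ is a bounded operator on $L^1[a,b]$ by Theorem \ref{bounded}, so every series in sight converges absolutely. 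The Riemann--Liouville semigroup property then collapses $\prescript{RL}{}I_{a+}^{\alpha j}\prescript{RL}{}I_{a+}^{\alpha k}=\prescript{RL}{}I_{a+}^{\alpha(j+k)}$.

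Next I would reindex the double series by the total power $m=j+k$ of the underlying Riemann--Liouville integral. Grouping together the terms sharing a common $m$ yields
\[
\mathcal{I}_{a+}^{(\alpha,\beta)}\mathcal{I}_{a+}^{(\alpha,\gamma)}f(t)=\sum_{m=0}^{\infty}\left(\sum_{j=0}^{m}\binom{\beta}{j}\binom{\gamma}{m-j}\right)\frac{(1-\alpha)^{\beta+\gamma-m}\alpha^{m}}{B(\alpha)^{\beta+\gamma}}\,\prescript{RL}{}I_{a+}^{\alpha m}f(t).
\]
The inner coefficient is precisely the left-hand side of the Chu--Vandermonde identity $\sum_{j=0}^{m}\binom{\beta}{j}\binom{\gamma}{m-j}=\binom{\beta+\gamma}{m}$, which holds for arbitrary real $\beta,\gamma$: for each fixed $m$ both sides are polynomials in $\beta$ and $\gamma$ that agree at all non-negative integer values and hence everywhere. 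Substituting this in, the right-hand side becomes exactly the series \eqref{IAB:defn:int} defining $\mathcal{I}_{a+}^{(\alpha,\beta+\gamma)}f(t)$, which establishes \eqref{semigroup:eqn} for $0<\alpha<1$; the endpoint cases $\alpha=0$ and the trivial cases $\beta=0$ or $\gamma=0$ reduce to the identity operator and are immediate.

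I expect the main obstacle to be the analytic justification of the two rearrangements in the first step: interchanging the bounded operator $\mathcal{I}_{a+}^{(\alpha,\beta)}$ with the infinite sum, and then reordering the double series into the single reindexed series. Both rest on absolute convergence, and I would anchor them on the tail estimates furnished by Theorem \ref{bounded}, which bound the operator norms $\|\mathcal{I}_{a+}^{(\alpha,\beta)}\|$ and thereby control the partial sums uniformly enough to apply a Fubini/Tonelli argument for series. As an independent cross-check, the identity is transparent on the Laplace side: by Theorem \ref{Laplace} each operator acts (for $a=0$) as multiplication by $\big(\tfrac{1-\alpha}{B(\alpha)}+\tfrac{\alpha}{B(\alpha)}s^{-\alpha}\big)^{\beta}$, so composing the two operators simply adds the exponents $\beta$ and $\gamma$, and injectivity of the Laplace transform then recovers \eqref{semigroup:eqn}. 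This confirms the combinatorial computation while the direct series argument delivers the result for general $a$ and all $L^1$ functions.
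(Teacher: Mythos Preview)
Your proposal is correct and follows essentially the same route as the paper: expand both operators via \eqref{IAB:defn:int}, use the Riemann--Liouville semigroup law to combine $\prescript{RL}{}I_{a+}^{\alpha j}\prescript{RL}{}I_{a+}^{\alpha k}$, reindex by $m=j+k$, and apply the Vandermonde identity $\sum_{j=0}^m\binom{\beta}{j}\binom{\gamma}{m-j}=\binom{\beta+\gamma}{m}$. Your additional attention to absolute convergence via Theorem~\ref{bounded} and the Laplace-side sanity check are welcome elaborations, but the core argument is identical to the paper's.
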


\begin{proof}
Once again, this is a consequence of the fact that our new model is derived from binomial expansions. We use the formula \eqref{iterate:beta:RL} and the fact that Riemann--Liouville integrals have a semigroup property:
\begin{align*}
\mathcal{I}_{a+}^{(\alpha,\beta)}\mathcal{I}_{a+}^{(\alpha,\gamma)}f(t)&=\sum_{k=0}^{\infty}\frac{\binom{\beta}{k}(1-\alpha)^{\beta-k}\alpha^k}{B(\alpha)^{\beta}}\prescript{RL}{}I_{a+}^{\alpha k}\left[\sum_{j=0}^{\infty}\frac{\binom{\gamma}{j}(1-\alpha)^{\gamma-j}\alpha^j}{B(\alpha)^{\gamma}}\prescript{RL}{}I_{a+}^{\alpha j}f(t)\right] \\
&=\sum_{k=0}^{\infty}\sum_{j=0}^{\infty}\frac{\binom{\beta}{k}\binom{\gamma}{j}(1-\alpha)^{(\beta+\gamma)-(k+j)}\alpha^{k+j}}{B(\alpha)^{\beta+\gamma}}\prescript{RL}{}I_{a+}^{\alpha(k+j)}f(t) \\
&=\sum_{m=0}^{\infty}\sum_{k=0}^{m}\frac{\binom{\beta}{k}\binom{\gamma}{m-k}(1-\alpha)^{(\beta+\gamma)-m}\alpha^m}{B(\alpha)^{\beta+\gamma}}\prescript{RL}{}I_{a+}^{\alpha m}f(t) \\
&=\sum_{m=0}^{\infty}\frac{\binom{\beta+\gamma}{m}(1-\alpha)^{(\beta+\gamma)-m}\alpha^m}{B(\alpha)^{\beta+\gamma}}\prescript{RL}{}I_{a+}^{\alpha m}f(t)=\mathcal{I}_{a+}^{(\alpha,\beta+\gamma)}f(t),
\end{align*}
where we have used the binomial identity $\sum_{k=0}^m\binom{\beta}{k}\binom{\gamma}{m-k}=\binom{\beta+\gamma}{m}$.
\end{proof}

We also show that all differintegral operators in the new model are bounded in the $L^1$ and $L^{\infty}$ norms.

\begin{theorem}[Bounded operators]
\label{bounded}
Let $a,b,\alpha,\beta$ be as in Definition \ref{IAB:defn}. There exists a positive constant $K$ such that for any $f\in L^1[a,b]$,
\begin{equation}
\label{bounded:L1}
||\mathcal{I}_{a+}^{(\alpha,\beta)}f||_1\leq K||f||_1,
\end{equation}
and, if we also assume $f$ is continuous,
\begin{equation}
\label{bounded:Linfinity}
||\mathcal{I}_{a+}^{(\alpha,\beta)}f||_{\infty}\leq K||f||_{\infty}.
\end{equation}
\end{theorem}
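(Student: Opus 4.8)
The plan is to bound the defining series \eqref{IAB:defn:int} term by term using standard estimates for Riemann--Liouville integrals, and then to prove that the resulting series of constants converges. First I would record the elementary kernel estimates for the RL integral of a positive order $s$. Writing $\prescript{RL}{}I_{a+}^{s}f(t)=\frac{1}{\Gamma(s)}\int_a^t(t-x)^{s-1}f(x)\,\mathrm{d}x$ and applying Fubini--Tonelli to $\int_a^b\!\int_a^t$, one obtains
\[
\|\prescript{RL}{}I_{a+}^{s}f\|_1\leq\frac{(b-a)^s}{\Gamma(s+1)}\|f\|_1,
\]
while pulling $\|f\|_{\infty}$ out of the same integral gives, for continuous $f$,
\[
\|\prescript{RL}{}I_{a+}^{s}f\|_{\infty}\leq\frac{(b-a)^s}{\Gamma(s+1)}\|f\|_{\infty}.
\]
Crucially, the two estimates share the same constant, so a single value of $K$ will serve both \eqref{bounded:L1} and \eqref{bounded:Linfinity}.

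Next I would apply the triangle inequality to \eqref{IAB:defn:int} with $s=\alpha k$. Since the genuinely fractional range is $0<\alpha<1$ (the endpoint $\alpha=0$ being trivial, as the operator is then the identity), the prefactor $(1-\alpha)^{\beta}/B(\alpha)^{\beta}$ is a fixed positive number, and each term is controlled in either norm by $\bigl|\binom{\beta}{k}\bigr|(1-\alpha)^{-k}\alpha^k(b-a)^{\alpha k}/\Gamma(\alpha k+1)$ times $\|f\|$. Summing yields the candidate constant
\[
K=\left(\frac{1-\alpha}{B(\alpha)}\right)^{\beta}\sum_{k=0}^{\infty}\left|\binom{\beta}{k}\right|\frac{\alpha^k(b-a)^{\alpha k}}{(1-\alpha)^k\,\Gamma(\alpha k+1)},
\]
which manifestly depends only on $a,b,\alpha,\beta$ and not on $f$.

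The main obstacle is to verify that this series converges, and the subtlety lies in the regime $\beta<0$ (the derivative case), where the coefficients $\binom{\beta}{k}$ do not decay but in fact grow polynomially in $k$. I would settle convergence by the ratio test: the ratio of consecutive terms equals $\frac{|\beta-k|}{k+1}\cdot\frac{\alpha(b-a)^{\alpha}}{1-\alpha}\cdot\frac{\Gamma(\alpha k+1)}{\Gamma(\alpha k+\alpha+1)}$, in which the binomial ratio tends to $1$, the geometric factor is constant, and the Gamma ratio behaves like $(\alpha k)^{-\alpha}\to0$ by Stirling's formula. Hence the ratio tends to $0$ and the series converges absolutely for every real $\beta$; the factorial-type decay of $1/\Gamma(\alpha k+1)$ dominates all polynomial or geometric growth. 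The only point demanding genuine care is confirming that this Gamma-ratio asymptotic indeed overwhelms the growth of $\binom{\beta}{k}$, which Stirling's formula guarantees.

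Finally I would note that the argument does double duty: the absolute convergence of the series of $L^1$-norms shows that the series \eqref{IAB:defn:int} itself converges in $L^1[a,b]$, which both justifies the interchange of norm and summation used above and supplies the well-definedness promised in the remark preceding this section. I do not expect the derivation of the RL bounds or the ratio test itself to present any difficulty, so the estimate for $K$ is the crux of the proof.
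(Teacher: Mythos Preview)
Your approach is essentially the same as the paper's: bound each Riemann--Liouville term in the series \eqref{IAB:defn:int}/\eqref{iterate:beta:integral} and sum, arriving at the identical constant $K=\left(\tfrac{1-\alpha}{B(\alpha)}\right)^{\beta}\sum_{k\geq0}\bigl|\binom{\beta}{k}\bigr|\bigl(\tfrac{\alpha(b-a)^{\alpha}}{1-\alpha}\bigr)^{k}\tfrac{1}{\Gamma(k\alpha+1)}$. The only differences are cosmetic or in your favour: the paper invokes the first and second mean value theorems for integrals where you use the direct pull-out of $\|f\|_{\infty}$ and Fubini--Tonelli, and the paper never actually checks that the series defining $K$ converges---so your ratio-test step via the Gamma-ratio asymptotic fills a gap the paper leaves implicit.
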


\begin{proof}
We use the formula \eqref{iterate:beta:integral} to find bounds on $\mathcal{I}_{a+}^{(\alpha,\beta)}f(t)$.

By the first mean value theorem for integrals, provided $f$ is continuous (and therefore bounded), we have \[\int_{a}^t(t-x)^{k\alpha-1}f(x)\,\mathrm{d}x=f(c)\int_a^t(t-x)^{k\alpha-1}\,\mathrm{d}x=f(c)\frac{(t-a)^{k\alpha}}{k\alpha}\] for some $c\in(a,t)$, and therefore \[\left|\int_{a}^t(t-x)^{k\alpha-1}f(x)\,\mathrm{d}x\right|\leq||f||_{\infty}\frac{(b-a)^{k\alpha}}{k\alpha}.\] Thus, the formula \eqref{iterate:beta:integral} gives
\begin{align*}
\left|\mathcal{I}_{a+}^{(\alpha,\beta)}f(t)\right|&=\left|\left(\frac{1-\alpha}{B(\alpha)}\right)^{\beta}f(t)+\sum_{k=1}^{\infty}\frac{\binom{\beta}{k}(1-\alpha)^{\beta-k}\alpha^k}{B(\alpha)^{\beta}\Gamma(k\alpha)}\int_{a}^t(t-x)^{k\alpha-1}f(x)\,\mathrm{d}x\right| \\
&\leq\left(\frac{1-\alpha}{B(\alpha)}\right)^{\beta}||f||_{\infty}+\sum_{k=1}^{\infty}\frac{\left|\binom{\beta}{k}\right|(1-\alpha)^{\beta-k}\alpha^k}{B(\alpha)^{\beta}\Gamma(k\alpha)}\left|\int_{a}^t(t-x)^{k\alpha-1}f(x)\,\mathrm{d}x\right| \\
&\leq\left[\left(\frac{1-\alpha}{B(\alpha)}\right)^{\beta}+\sum_{k=1}^{\infty}\frac{\left|\binom{\beta}{k}\right|(1-\alpha)^{\beta-k}\alpha^k(b-a)^{k\alpha}}{B(\alpha)^{\beta}\Gamma(k\alpha+1)}\right]||f||_{\infty} \\
&=\left[\left(\frac{1-\alpha}{B(\alpha)}\right)^{\beta}\sum_{k=0}^{\infty}\left|\binom{\beta}{k}\right|\left(\frac{\alpha(b-a)^{\alpha}}{1-\alpha}\right)^k\frac{1}{\Gamma(k\alpha+1)}\right]||f||_{\infty}.
\end{align*}
The term in square brackets depends only on $a$, $b$, $\alpha$, and $\beta$, so we have proved \eqref{bounded:Linfinity}.

By the second mean value theorem for integrals, we have \[\int_{a}^t(t-x)^{k\alpha-1}|f(x)|\,\mathrm{d}x=(t-a)^{k\alpha-1}\int_a^c|f(x)|\,\mathrm{d}x\] for some $c\in(a,t]$, and therefore \[\left|\int_{a}^t(t-x)^{k\alpha-1}f(x)\,\mathrm{d}x\right|\leq||f||_1(t-a)^{k\alpha-1}.\] Thus, the formula \eqref{iterate:beta:integral} gives
\begin{equation*}
\left|\mathcal{I}_{a+}^{(\alpha,\beta)}f(t)\right|\leq\left(\frac{1-\alpha}{B(\alpha)}\right)^{\beta}|f(t)|+\sum_{k=1}^{\infty}\frac{\left|\binom{\beta}{k}\right|(1-\alpha)^{\beta-k}\alpha^k}{B(\alpha)^{\beta}\Gamma(k\alpha)}||f||_1(t-a)^{k\alpha-1}.
\end{equation*}
Integrating this inequality with respect to $t$ yields
\begin{align*}
\int_a^b\left|\mathcal{I}_{a+}^{(\alpha,\beta)}f(t)\right|\,\mathrm{d}t\leq\left(\frac{1-\alpha}{B(\alpha)}\right)^{\beta}\int_a^b|f(t)|\,\mathrm{d}t+\sum_{k=1}^{\infty}\frac{\left|\binom{\beta}{k}\right|(1-\alpha)^{\beta-k}\alpha^k}{B(\alpha)^{\beta}\Gamma(k\alpha)}||f||_1\frac{(b-a)^{k\alpha}}{k\alpha},
\end{align*}
and therefore $||\mathcal{I}_{a+}^{(\alpha,\beta)}f||_1\leq K||f||_1$ with the constant $K$ being exactly the same as before.
\end{proof}

\section{Differential equations and applications}

We now consider certain classes of fractional ordinary differintegral equations which can be solved in the new model. For example, let us solve the following equation:
\begin{equation}
\label{ODE1}
\mathcal{I}_{0+}^{(\alpha,\beta)}f(t)=P+Qf(t)+R(f(t))^2,
\end{equation}
for fixed $\alpha,\beta,P,Q,R$, using a series solution method with the following ansatz:
\begin{equation}
\label{ansatz}
f(t)=\sum_{n=0}^{\infty}a_nt^{n\alpha}.
\end{equation}
When $f(t)$ is in this form, we use the formula \eqref{iterate:beta:RL} to evaluate $\mathcal{I}_{0+}^{(\alpha,\beta)}f(t)$, as follows:
\begin{align}
\nonumber \mathcal{I}_{0+}^{(\alpha,\beta)}f(t)&=\sum_{k=0}^{\infty}\frac{\binom{\beta}{k}(1-\alpha)^{\beta-k}\alpha^k}{B(\alpha)^{\beta}}\prescript{RL}{}I_{a+}^{\alpha k}\left(\sum_{l=0}^{\infty}a_lt^{l\alpha}\right) \\
\nonumber &=\sum_{k=0}^{\infty}\sum_{l=0}^{\infty}\frac{\binom{\beta}{k}(1-\alpha)^{\beta-k}\alpha^k}{B(\alpha)^{\beta}}a_l\frac{\Gamma(l\alpha+1)}{\Gamma((k+l)\alpha+1)}t^{(k+l)\alpha} \\
\label{ODE1:LHS} &=\sum_{m=0}^{\infty}\frac{t^{m\alpha}}{B(\alpha)^{\beta}\Gamma(m\alpha+1)}\sum_{k=0}^ma_{m-k}\binom{\beta}{k}(1-\alpha)^{\beta-k}\alpha^k\Gamma((m-k)\alpha+1).
\end{align}
This is the left-hand side of the equation \eqref{ODE1}, while the right-hand side is:
\begin{align}
\nonumber P+Qf(t)+R(f(t))^2&=P+Q\sum_{m=0}^{\infty}a_mt^{m\alpha}+R\sum_{k=0}^{\infty}\sum_{l=0}^{\infty}a_ka_lt^{(k+l)\alpha} \\
\label{ODE1:RHS} &=\sum_{m=0}^{\infty}\left(P\delta_{m0}+Qa_m+R\sum_{k=0}^ma_ka_{m-k}\right)t^{m\alpha}.
\end{align}
Equating coefficients in \eqref{ODE1:LHS} and \eqref{ODE1:RHS}, we find for $m=0$ that \[a_0\left(\frac{1-\alpha}{B(\alpha)}\right)^{\beta}=P+Qa_0+Ra_0^2\] and therefore
\begin{equation}
\label{ODE1:a0}
a_0=\frac{\left(\frac{1-\alpha}{B(\alpha)}\right)^{\beta}-Q\pm\sqrt{\left[\left(\frac{1-\alpha}{B(\alpha)}\right)^{\beta}-Q\right]^2-4PR}}{2R},
\end{equation}
while for $m>0$ we have
\begin{multline*}
\frac{1}{\Gamma(m\alpha+1)B(\alpha)^{\beta}}\left[a_m(1-\alpha)^{\beta}\Gamma(m\alpha+1)+\sum_{k=1}^ma_{m-k}\binom{\beta}{k}(1-\alpha)^{\beta-k}\alpha^k\Gamma((m-k)\alpha+1)\right] \\ =Qa_m+R\left[2a_ma_0+\sum_{k=1}^{m-1}a_ka_{m-k}\right]
\end{multline*}
and therefore \[a_m\left[\left(\frac{1-\alpha}{B(\alpha)}\right)^{\beta}-Q-2Ra_0\right]=R\sum_{k=1}^{m-1}a_ka_{m-k}-\frac{\sum_{k=1}^ma_{m-k}\binom{\beta}{k}(1-\alpha)^{\beta-k}\alpha^k\Gamma((m-k)\alpha+1)}{\Gamma(m\alpha+1)B(\alpha)^{\beta}}.\]
And the formula \eqref{ODE1:a0} for $a_0$ enables us to simplify the $a_m$ coefficient here. Thus, we derive the following general expression for the solution $f(t)$ of \eqref{ODE1}:
\begin{equation}
\label{ODE1:soln}
f(t)=a_0+\sum_{m=1}^{\infty}\left[\frac{R\sum_{k=1}^{m-1}a_ka_{m-k}-\frac{\sum_{k=1}^ma_{m-k}\binom{\beta}{k}(1-\alpha)^{\beta-k}\alpha^k\Gamma((m-k)\alpha+1)}{\Gamma(m\alpha+1)B(\alpha)^{\beta}}}{\mp\left(\left[\left(\tfrac{1-\alpha}{B(\alpha)}\right)^{\beta}-Q\right]^2-4PR\right)^{1/2}}\right]t^{m\alpha},
\end{equation}
where the constant term $a_0$ is given by \eqref{ODE1:a0}.

Differential equations of this and similar forms are important in the modelling of various real-world problems. As an example application, motivated by the physical study of \cite{sun}, we consider the following ODE, which can be used to create a variable-order system and model relaxation processes for a fractor in an electronic circuit.
\begin{equation}
\label{ODE2}
\mathcal{D}_{0+}^{(\alpha,\beta)}f(t)=-Cf(t)+q(t),
\end{equation}
where $q(t)$ is a known forcing function, $C$ is a constant, and $\mathcal{D}$ is defined by \eqref{IAB:defn:deriv} with $0<\alpha<1$ and $\beta>0$. We assume that $q$ can be written in the form \[q(t)=\sum_{n=0}^{\infty}c_nt^{n\alpha},\] and again we use \eqref{ansatz} as our ansatz for the solution $f$.

By the same approach as we used to derive \eqref{ODE1:LHS} and \eqref{ODE1:RHS}, we find that \eqref{ODE2} is equivalent to the identity
\begin{equation}
\label{ODE2:coeffs}
\frac{1}{\Gamma(m\alpha+1)}\sum_{k=0}^m\frac{a_{m-k}\binom{-\beta}{k}(1-\alpha)^{-\beta-k}\alpha^k\Gamma((m-k)\alpha+1)}{B(\alpha)^{-\beta}}=-Ca_m+c_m,
\end{equation}
valid for all $m\geq0$. Solving this for $m=0$, we find
\begin{equation}
\label{ODE2:a0}
a_0=\frac{c_0}{C+\big(\frac{B(\alpha)}{1-\alpha}\big)^{\beta}},
\end{equation}
while for $m>0$ the identity \eqref{ODE2:coeffs} rearranges to
\begin{equation}
\label{ODE2:am}
a_m=\frac{c_m}{C+\big(\frac{B(\alpha)}{1-\alpha}\big)^{\beta}}-\sum_{k=1}^m\frac{a_{m-k}\binom{-\beta}{k}\alpha^kB(\alpha)^{\beta}\Gamma((m-k)\alpha+1)}{(1-\alpha)^{\beta+k}\Gamma(m\alpha+1)\left(C+\big(\frac{B(\alpha)}{1-\alpha}\big)^{\beta}\right)}
\end{equation}
Substituting \eqref{ODE2:a0} and \eqref{ODE2:am} into the ansatz \eqref{ansatz}, we find a solution to the ODE \eqref{ODE2} in the following form:
\begin{equation}
\label{ODE2:soln}
f(t)=\frac{q(t)}{C+\big(\frac{B(\alpha)}{1-\alpha}\big)^{\beta}}-\sum_{m=1}^{\infty}t^{m\alpha}\sum_{k=1}^m\frac{a_{m-k}\binom{-\beta}{k}\alpha^kB(\alpha)^{\beta}\Gamma((m-k)\alpha+1)}{(1-\alpha)^{\beta+k}\Gamma(m\alpha+1)\left(C+\big(\frac{B(\alpha)}{1-\alpha}\big)^{\beta}\right)}
\end{equation}

As discussed in \cite{sun}, this solution to \eqref{ODE2} can be used to predict the behaviour of a dynamic-order fractional dynamic system, which is a way of modelling certain properties of a fractor in an electronic circuit.

\section{Conclusions}

In this manuscript we have introduced a new type of fractional calculus by iterating the integral corresponding to the fractional differintegral with Mittag-Leffler kernel. Our model falls into the Riemann--Liouville class, being defined by an integral formula with a special function in the kernel, and it contains two parameters in the order of differintegration. We believe that having a non-local operator with two parameters will enable us to describe better the non-local behaviour of the dynamics of complex systems in this new model.

We have proved some important fundamental properties of our new type of fractional calculus: evaluating their Laplace transforms, establishing a semigroup property -- which is significant in any fractional model -- and proving the boundedness of the new operators. We also presented some concrete applications, solving some related fractional differential equations and indicating how these can be applied to certain real-world systems.

\end{document}